\documentclass[11pt]{article}
\usepackage{amsmath, amssymb, amscd, amsthm, amsfonts}
\usepackage{graphicx}
\usepackage{hyperref}
\usepackage{setspace}
\usepackage{subfig}
\usepackage{comment}
\usepackage{float}
\usepackage{bbm}
\usepackage{algorithm}
\usepackage{algorithmic}
\usepackage{bigints}
\usepackage{mathtools}
\usepackage[nottoc,notlot,notlof]{tocbibind}
\usepackage[margin=1.2in]{geometry}

\linespread{1.0}

\title{Edgeworth expansion for Bernoulli weighted mean}
\author{Pierre-Louis Cauvin\thanks{Criteo AI Lab.} \thanks{Grenoble INP - UGA, Ensimag, 38400 Saint-Martin-d'Hères, France.}\\ \href{mailto:pierre-louis.cauvin@grenoble-inp.org}{pierre-louis.cauvin@grenoble-inp.org}}
\date{}

\newtheorem{theorem}{Theorem}[section]
\newtheorem{defn}[theorem]{Definition}
\newtheorem{lemma}[theorem]{Lemma}

\newtheorem{proposition}[theorem]{Proposition}
\newtheorem{remark}[theorem]{Remark}
\newtheorem{example}[theorem]{Example}

\newcommand{\rr}{\mathbb{R}}

\newcommand{\E}[1]{\mathbb{E} \left[ \mspace{1mu} #1 \mspace{1mu} \right]}
\renewcommand{\P}[1]{\mathbb{P} \left( #1 \right)}
\renewcommand{\O}{\mathcal{O}}

\newcommand{\ceil}[1]{\left\lceil #1 \right\rceil}

\usepackage{listings}
\usepackage{color}

\definecolor{dkgreen}{rgb}{0,0.6,0}
\definecolor{gray}{rgb}{0.5,0.5,0.5}
\definecolor{mauve}{rgb}{0.58,0,0.82}

\lstset{frame=tb,
  language=Python,
  aboveskip=3mm,
  belowskip=3mm,
  showstringspaces=false,
  columns=flexible,
  basicstyle={\small\ttfamily},
  numbers=none,
  numberstyle=\tiny\color{gray},
  keywordstyle=\color{blue},
  commentstyle=\color{dkgreen},
  stringstyle=\color{mauve},
  breaklines=true,
  breakatwhitespace=true,
  tabsize=3
}

\begin{document}

\maketitle

\begin{abstract}
In this work, we derive an Edgeworth expansion for the Bernoulli weighted mean $\hat{\mu} = \frac{\sum_{i=1}^n Y_i T_i}{\sum_{i=1}^n T_i}$ in the case where $Y_1, \dots, Y_n$ are i.i.d. non semi-lattice random variables  and $T_1, \dots, T_n$ are Bernoulli distributed random variables with parameter $p$. We also define the notion of a semi-lattice distribution, which gives a more geometrical equivalence to the classical Cramér's condition in dimensions bigger than 1. Our result provides a first step into the generalization of classical Edgeworth expansion theorems for random vectors that contain both semi-lattice and non semi-lattice variables, in order to prove consistency of bootstrap methods in more realistic setups, for instance in the use case of online AB testing.
\end{abstract}

\section{Introduction and Main Result}

In probability and statistic applied problems, one often have to deal with the limit of sample distribution  when its size gets bigger, in order to construct asymptotically consistent estimators or confidence intervals for instance. The most widely used theorem in this case is the classical Central Limit theorem (CLT) and its variation the Delta method, which guarantee that a lot of statistics converge in law to a Gaussian distribution. However, the CLT only gives a convergence result but not its speed of convergence nor the error terms of the approximation when we only deal with low finite sample size. The first point can be tackme with Berry-Esseen bounds, stating that under some regularity and finite moments conditions the convergence is of order $\O(1/\sqrt{n})$ where $n$ is the sample size. Whereas for the second point we need a precise analysis of error terms of the approximation, what motivates the theory of Edgeworth expansions \cite{hall1997bootstrap}.

We say that a statistics $T(Y_1, \dots, Y_n)$ of some sample $(Y_1, \dots, Y_n)$ admits an Edgeworth expansion of order $q$ if there exist polynomials $p_1, \dots, p_q$ whose coefficients depend only on the moments of $(Y_1, \dots, Y_n)$ such that
\begin{align*}
    \P{T \leq x} = \Phi(x) + \sum_{j=1}^{q} n^{-j/2}p_j(x)\phi(x) + \O(n^{-(q+1)/2})
\end{align*}
uniformly in $x$, where $\Phi$ and $\phi$ are the cumulative distribution function and the probability distribution function of a standard Gaussian distribution respectively.

The theory of Edgeworth expansions was vastly explored since the first formal asymptotic expansions were proposed in a paper by Tchebycheff in 1890 \cite{Tchebycheff}, and later on at the end of the $20^{th}$ century when contemporary statisticians found its usefulness to study properties of empirical statistical methods. One pioneer of this revival of interest was Peter Hall, who single-handedly proved consistency, orders of convergence and coverage error of confidence intervals for the B. Efron's bootstrap \cite{EfronBoot}, one of the most used statistical approximation method, based on general existence properties of Edgeworth expansions \cite{hall1997bootstrap}.

However, most of these general existence theorems hold only for function of empirical means of i.i.d. samples and under what is called the Cramér's condition, which denies the use of semi-lattice (see Definition \ref{semi-lattice}) random variables, including many of the classical discrete distributions. Some progress have been made since the start of the years 2000s in order to generalize and weaken such conditions, for instance \cite{EdgGeneralStats} shows the existence of Edgeworth expansions for a broader class of statistics which contains $U$- and $L$-statistics, but still need the Cramér's condition, whereas some other authors have proposed some Edgeworth expansions for purely lattice triangular array \cite{EdgLatticeTriangular}, bounded integer valued random variables \cite{EdgIndepBoundedInt}, or have given a weaker Cramér's condition to allow discrete but non semi-lattice distribution \cite{WeakCramer}.

Therefore, in this work we propose a first step into the generalization of Edgeworth expansions for some semi-lattice random variables, which are of vastly practical interests. More precisely, we prove the following theorem which provides an Edgeworth expansion for the Bernoulli weighted mean of non semi-lattice random variables, which is purely semi-lattice due to the mixture of Bernoulli and non semi-lattice random variables as we will see in Section \ref{Preliminary}.

\begin{theorem}[Edgeworth expansion for Bernoulli weighted mean]\label{MainTheorem}
    Suppose that $Y_1, \dots, Y_n$ are i.i.d. non semi-lattice random variables with mean $\mu$ and variance $\sigma^2$. Let $T_1, \dots, T_n$ be i.i.d. Bernoulli distributed random variables with parameter $p > 0$, and write $N = \sum_{i=1}^n T_i$. Then, if $Y$ has at least $q$ finite moments, we have the Edgeworth expansion
    \begin{align}
        \P{\sqrt{N}\frac{\hat{\mu} - \mu}{\sigma} \leq x} = \Phi(x) + \sum_{j=1}^{q-2} n^{-j/2} p^*_j(x) \phi(x) + \O\left(n^{-(q-1)/2}\right)
    \end{align}
    uniformly in $x$, where $\hat{\mu} = \begin{cases}
        \frac{\sum_{i=1}^n Y_i T_i}{\sum_{i=1}^n T_i} & \text{if $\sum_{i=1}^n T_i \neq 0$}\\
        \hfil
        0 & \text{otherwise}
    \end{cases}$ is the Bernoulli weighted mean, and $p^*_j$'s are polynomials of degree at most $3j$ depending only on the first $j+2$ moments of $Y_1$ and on moments of $T$.
\end{theorem}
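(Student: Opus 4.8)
The plan is to condition on $N = \sum_{i=1}^n T_i$, reduce the problem to a classical Edgeworth expansion for a sample mean of random (but tightly concentrated) size, and then average over the binomial law of $N$. First I would use that $(Y_1,\dots,Y_n)$ is independent of $(T_1,\dots,T_n)$: for every $0$--$1$ vector $t$ with $\sum_i t_i = m \ge 1$, the conditional law of $\hat\mu$ given $\{(T_1,\dots,T_n) = t\}$ is the law of the sample mean $\bar Y_m$ of $m$ i.i.d.\ copies of $Y$, and this does not depend on $t$. Hence, with $S_m(x) := \P{\sqrt m\,(\bar Y_m - \mu)/\sigma \le x}$ and using that $\hat\mu = 0$, so $\sqrt N(\hat\mu - \mu)/\sigma = 0$, on $\{N = 0\}$,
\begin{align*}
    \P{\sqrt N \frac{\hat\mu - \mu}{\sigma} \le x} = \P{N = 0}\,\mathbbm{1}\{0 \le x\} + \sum_{m=1}^n \P{N = m}\, S_m(x).
\end{align*}
Since $p$ is fixed and positive, $\P{N \le np/2} = \O(e^{-cn})$ by Hoeffding's inequality, so every term with $m \le np/2$ (in particular the $m = 0$ term) is negligible uniformly in $x$.

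Next I would invoke the classical Edgeworth expansion for $S_m$. In dimension one the non semi-lattice hypothesis on $Y$ amounts to Cramér's condition (this is developed in Section~\ref{Preliminary}), and $Y$ has $q$ finite moments, so there exist polynomials $p_1,\dots,p_{q-2}$, with $\deg p_j \le 3j$ and $p_j$ determined by the first $j+2$ moments of $Y$, and a constant $C$ with $\big|S_m(x) - \Phi(x) - \sum_{j=1}^{q-2} m^{-j/2} p_j(x)\phi(x)\big| \le C m^{-(q-1)/2}$ for all $m \ge 1$, uniformly in $x$. Substituting into the display above gives
\begin{align*}
    \P{\sqrt N \frac{\hat\mu - \mu}{\sigma} \le x} = \Phi(x) + \sum_{j=1}^{q-2} p_j(x)\phi(x)\, \E{N^{-j/2}\mathbbm{1}\{N \ge 1\}} + \O\!\big(n^{-(q-1)/2}\big)
\end{align*}
uniformly in $x$: indeed $\Phi(x)\P{N\ge1}$ differs from $\Phi(x)$ by $\O(e^{-cn})$, and the averaged remainder is $\O(n^{-(q-1)/2})$ because $N^{-(q-1)/2} \le (np/2)^{-(q-1)/2}$ on $\{N > np/2\}$ while the complementary contribution is exponentially small. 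It then remains to expand $m_j(n) := \E{N^{-j/2}\mathbbm{1}\{N\ge1\}}$ in powers of $n^{-1/2}$: restricting to $\{N > np/2\}$ and writing $N = np\,(1 + (N-np)/(np))$, a Taylor expansion of $u \mapsto (1+u)^{-j/2}$ together with the fact that the $k$-th central moment of $\mathrm{Bin}(n,p)$ is a polynomial in $n$ of degree $\floor{k/2}$ yields $m_j(n) = n^{-j/2}\big(p^{-j/2} + \sum_{i\ge1} a_{j,i}\, n^{-i}\big) + \O(e^{-cn})$, with $a_{j,i}$ depending only on $p$; truncating the inner sum at $i = \floor{(q-2-j)/2}$ costs only $\O(n^{-(q-1)/2})$.

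Inserting this and regrouping by powers of $n^{-1/2}$, I would read off
\begin{align*}
    p^*_j(x) = p^{-j/2} p_j(x) + \sum_{\substack{1 \le j' < j \\ j' \equiv j \ (\mathrm{mod}\ 2)}} b_{j,j'}\, p_{j'}(x),
\end{align*}
with $b_{j,j'}$ depending only on $p$. Since each $p_{j'}$ has degree $\le 3j' \le 3j$ and depends only on the first $j'+2 \le j+2$ moments of $Y$, it follows that $\deg p^*_j \le 3j$ and that $p^*_j$ depends only on the first $j+2$ moments of $Y$ and on $p$, equivalently on moments of $T$; collecting the error terms already bounded above completes the proof. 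The hard part will be the averaging step: obtaining the expansion of $\E{N^{-j/2}\mathbbm{1}\{N\ge1\}}$ and the averaged Edgeworth remainder with errors that are genuinely $\O(n^{-(q-1)/2})$ uniformly in $x$, which forces the truncation at $\{N > np/2\}$, the exponential tail estimates, and careful bookkeeping of the orders produced by the binomial central moments when recombining everything into the polynomials $p^*_j$.
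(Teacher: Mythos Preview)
Your proposal is correct and follows the same overall architecture as the paper's proof: condition on $N$, apply the one-dimensional Edgeworth expansion to the sample mean of size $N=m$ (valid because the non semi-lattice hypothesis is Cram\'er's condition), then average the resulting powers $m^{-j/2}$ against the binomial law of $N$ and regroup into powers of $n^{-1/2}$. The paper packages these steps as Lemma~\ref{Lemma8.2}, Lemma~\ref{EdgMean}, Lemma~\ref{BoundO}, and Lemma~\ref{BoundSum}, and the final regrouping into the polynomials $p^*_j$ (linear combinations of $p_{j'}$ with $j'\le j$, $j'\equiv j\pmod 2$, and coefficients depending only on $p$) matches what you wrote.

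The one substantive difference is in how the inverse binomial moments $\E{N^{-j/2}\mathbbm{1}\{N\ge1\}}$ are expanded. The paper invokes an external result of \v{Z}nidari\v{c} (Proposition~\ref{BinomExpansion}) giving a full asymptotic expansion of $f_\alpha(n)=\sum_{k\ge1}k^{-\alpha}\binom{n}{k}p^k(1-p)^{n-k}$ in powers of $(np)^{-1}$, and separately bounds the averaged Edgeworth remainder via $\E{(N^*+1)^{\alpha-1}}=\O(n^{\alpha-1})$ (Proposition~\ref{Prop:BinomBound} and Lemma~\ref{BoundO}). Your route is more self-contained: you truncate at $\{N>np/2\}$ via Hoeffding, then Taylor-expand $(1+u)^{-j/2}$ with $u=(N-np)/(np)$ and feed in the known degrees of binomial central moments. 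Both yield the same expansion with the same error; the paper's version is shorter because it outsources the analytic work, while yours avoids the external citation at the cost of the bookkeeping you flag at the end.
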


Bernoulli weighted means are the basis of many statistical online estimators, such as uplift modeling in AB testing with various applications in health or online advertising, where we want to estimate the different behaviors between a treatment group and a control group, where each individual is assigned at random based on a Bernoulli random variable. As such, we obtain estimators which are difference or ratios of Bernoulli weighted means, motivating our work. Another application of Bernoulli weighted means appears when we have a big sample, e.g. the whole population, and we want to estimate statistical properties of smaller samples taken at random with a Bernoulli variable flipped for each individual. 

This paper is organized as follow : in Section \ref{Preliminary}, we first state preliminary definitions and results that will be useful later on. More precisely, we give the classical Edgeworth expansion theorem \cite{hall1997bootstrap}, and we provide an equivalence for the multivariate Cramér's condition as the definition of a semi-lattice random variable. We further use this equivalence to show that the classical theorem doesn't hold for Bernoulli weighted means. Then, in Section \ref{proof} we provide a proof for Theorem \ref{MainTheorem}, based on intermediate results using a conditioning with respect to the Bernoulli random variables $T_1, \dots, T_n$ and asymptotic expansions for inverse binomial moments. We also give an easier proof for the bound of the expectation of arbitrary powers of a binomial random variable than the ones that can be found in \cite{Inverse_Moments_2000} and \cite{Znidaric_2009}.

\section{Preliminary results}\label{Preliminary}

We will extensively use the notion of uniformly bounded equality, which we recall its definition below, and we prove that it is stable with respect to addition.

\begin{defn}[Uniformly bounded]
    For two sequences of functions $(f_n)_{n\in\mathbb{N}}, (g_n)_{n\in\mathbb{N}} : \rr \to \rr$ and a sequence $(u_n)_{n\in\mathbb{N}}$ of $n$, we say that $f_n(x) = g_n(x) + \O(u_n)$ uniformly in $x$ if there exists an integer $N > 0$ such that
    \begin{align*}
        \underset{x \in \rr}{\sup} \, | f_n(x) - g_n(x) | \leq C u_n
    \end{align*}
    for all $n \geq N$, where $C > 0$ is a constant which does not depend on $x$.
\end{defn}

\begin{proposition}[Linearity of uniform bounds]\label{LinBounds}
    Let $(f^1_n)_{n\in\mathbb{N}}, (f^2_n)_{n\in\mathbb{N}}, (g^1_n)_{n\in\mathbb{N}}, (g^2_n)_{n\in\mathbb{N}} : \rr \to \rr$ be sequences of functions, and let $(u^1_n)_{n\in\mathbb{N}}, (u^2_n)_{n\in\mathbb{N}}$ be sequences of $n$. Suppose that $f^1_n(x) = g^1_n(x) + \O(u^1_n)$ and $f^2_n(x) = g^2_n(x) + O(u^2_n)$ both uniformly in $x$. Then
    \begin{align*}
        f^1_n(x) + f^2_n(x) = g^1_n(x) + g^2_n(x) + \O(u^1_n + u^2_n)
    \end{align*}
    uniformly in $x$.
\end{proposition}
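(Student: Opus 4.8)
The plan is to unwind the definition of uniformly bounded equality for each of the two hypotheses, combine the resulting sup-bounds with the triangle inequality applied pointwise, and then absorb the two constants into a single one.

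Concretely, first I would invoke the definition applied to $f^1_n(x) = g^1_n(x) + \O(u^1_n)$: there exist an integer $N_1 > 0$ and a constant $C_1 > 0$, independent of $x$, with $\sup_{x \in \rr} |f^1_n(x) - g^1_n(x)| \le C_1 u^1_n$ for all $n \ge N_1$. Symmetrically, from $f^2_n(x) = g^2_n(x) + \O(u^2_n)$ I get $N_2 > 0$ and $C_2 > 0$ with $\sup_{x \in \rr} |f^2_n(x) - g^2_n(x)| \le C_2 u^2_n$ for all $n \ge N_2$. I would then set $N = \max(N_1, N_2)$ and $C = \max(C_1, C_2)$, both of which still do not depend on $x$.

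The key step is the pointwise triangle inequality: for every $x \in \rr$,
\begin{align*}
    \bigl| (f^1_n(x) + f^2_n(x)) - (g^1_n(x) + g^2_n(x)) \bigr| \le |f^1_n(x) - g^1_n(x)| + |f^2_n(x) - g^2_n(x)|.
\end{align*}
Taking the supremum over $x$ on the left and bounding each term on the right by its own supremum (sub-additivity of $\sup$), then applying the two hypotheses, gives, for all $n \ge N$,
\begin{align*}
    \sup_{x \in \rr} \bigl| (f^1_n(x) + f^2_n(x)) - (g^1_n(x) + g^2_n(x)) \bigr| \le C_1 u^1_n + C_2 u^2_n \le C \, (u^1_n + u^2_n),
\end{align*}
where the last inequality uses $u^1_n, u^2_n \ge 0$ (the sequences arise as error magnitudes, hence nonnegative). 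This is exactly the assertion $f^1_n(x) + f^2_n(x) = g^1_n(x) + g^2_n(x) + \O(u^1_n + u^2_n)$ uniformly in $x$, with witnessing constants $N$ and $C$.

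I do not anticipate any real obstacle here: the statement is essentially the triangle inequality together with the observation that a finite maximum of finitely many valid thresholds is again valid. The only mild point to be careful about is that $C$ and $N$ must be chosen uniformly (independent of $x$), which is immediate since $C_1, C_2, N_1, N_2$ already are by hypothesis, and that $u^1_n, u^2_n$ be nonnegative so that $C_1 u^1_n + C_2 u^2_n \le C(u^1_n + u^2_n)$; if one prefers not to assume nonnegativity, one can instead simply keep the bound $2C(u^1_n + u^2_n)$ after noting $|u^i_n| \le u^1_n + u^2_n$ up to sign conventions, which changes nothing in the $\O$-notation.
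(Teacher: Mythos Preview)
Your proof is correct and follows essentially the same route as the paper: triangle inequality on $|f^1_n+f^2_n-g^1_n-g^2_n|$, pass to the supremum, then absorb $C_1,C_2$ into $C=\max(C_1,C_2)$. You are simply a bit more explicit about the threshold $N=\max(N_1,N_2)$ and the nonnegativity caveat, both of which the paper leaves implicit.
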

\begin{proof}
    By triangular inequality, we have
    \begin{align*}
        \underset{x \in \rr}{\sup} \, | f^1_n(x) + f^2_n(x) - g^1_n(x) - g^2_n(x) | &\leq \underset{x \in \rr}{\sup} \, | f^1_n(x) - g^1_n(x) | + \underset{x \in \rr}{\sup} \, | f^2_n(x) - g^2_n(x) |\\
        &\leq C^1 u^1_n + C^2 u^2_n\\
        &\leq C (u^1_n + u^2_n)
    \end{align*}
    where $C = \max(C^1, C^2)$.
\end{proof}

The following classical theorem on Edgeworth expansion for functions of sample mean is due to \cite{hall1997bootstrap} and can also be found in \cite{shao1995jackknife}. 

\begin{theorem}\label{Edgeworth_Ttm}
    Let $(X_n)$ be a sequence of i.i.d. random $d$-vectors and let $g$ be a Borel measurable function on $\mathbb{R}^d$ taking real values. Suppose that the following conditions hold
    \begin{enumerate}
        \item $X_1$ has at least $q \geq 3$ finite moments,
        \item $g$ is $\mathcal{C}^{p-1}$ in a neighborhood of $\mu = \E{X_1}$,
        \item $\nabla g(\mu) \neq 0$ where $\nabla$ denotes the gradient,
        \item (Cramér's condition) $\underset{||\mathbf{t}|| \to \infty}{\limsup} ~  |\psi_{X_1}(\mathbf{t})| < 1$ where $\psi_{X_{1}}(\mathbf{t}) = \E{\exp(i \mathbf{t} \cdot X_{1})}$ is the characteristic function of $X_1$.
    \end{enumerate}
    Then, we have the Edgeworth expansion 
    \begin{align*}
        \P{\sqrt{n} \, \dfrac{g\left(\bar{X}_n\right) - g(\mu)}{\sigma} \leq x} = \Phi(x) + \sum_{j=1}^{q-2} n^{-j/2} p_j(x) \phi(x) + \mathcal{O} \left( n^{-(q-1)/2} \right)
    \end{align*}
    uniformly in $x$, where $\sigma^2 = \nabla g(\mu)^T Var(X) \nabla g(\mu)$ and $p_j$ is a polynomial of degree at most $3j$ whose coefficients depend only on the first $j+2$ moments of $X_1$ and the partial derivatives of $g$ at $\mu$.
\end{theorem}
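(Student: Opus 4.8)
The plan is to follow the classical route for Edgeworth expansions of smooth functions of a sample mean: reduce the statistic to a polynomial in the normalized sample mean, apply the multivariate Edgeworth expansion for the mean itself — which is where Cramér's condition is used — and then transport that expansion through the map $g$.

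\textbf{Step 1 (reduction to a smooth function model).} Put $S_n = \sqrt n\,(\bar X_n - \mu)$, a normalized sum of i.i.d.\ centered $d$-vectors with covariance $\Sigma = \mathrm{Var}(X_1)$. On the event $E_n = \{\,|\bar X_n - \mu|\le\epsilon\,\}$, with $\epsilon$ small enough that $g$ is of class $\mathcal C^{q-1}$ there (condition 2), Taylor's theorem yields $T_n := \sqrt n\,(g(\bar X_n)-g(\mu))/\sigma = \sum_{k=1}^{q-1} n^{-(k-1)/2} Q_k(S_n) + R_n$, where $Q_k$ is a degree-$k$ polynomial whose coefficients are built from the partial derivatives of $g$ at $\mu$ (with $Q_1(u)=\nabla g(\mu)^T u/\sigma$), and $R_n = \mathcal O(n^{-(q-1)/2})$ on $E_n$ after the standard truncation $|S_n|\le n^{1/6}$. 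By Markov's inequality and condition 1, $\mathbb{P}(E_n^c) = \mathcal O(n^{-(q-1)/2})$, so it suffices to expand the law of $\widetilde T_n = \sum_{k=1}^{q-1} n^{-(k-1)/2} Q_k(S_n)$.

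\textbf{Step 2 (multivariate Edgeworth expansion for $S_n$).} Here conditions 1 and 4 enter. By the Bhattacharya--Rao expansion, there are Cramér--Edgeworth polynomials $\pi_1,\dots,\pi_{q-2}$ (with $\pi_j$ of degree $3j$ and coefficients depending only on the cumulants of $X_1$ of order $\le j+2$) such that, with $\nu_n(u) = \phi_\Sigma(u)\bigl(1 + \sum_{j=1}^{q-2} n^{-j/2}\pi_j(u)\bigr)$, one has $\sup_A\bigl|\mathbb{P}(S_n\in A) - \int_A \nu_n\bigr| = \mathcal O(n^{-(q-1)/2})$, the supremum over a class of sets that includes all half-spaces and all sufficiently regular perturbations of half-spaces. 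I would reprove this by Fourier inversion: on $\{|t|\le\delta\sqrt n\}$ one Taylor/cumulant-expands $\log\psi_{X_1}(t/\sqrt n)^n$, producing $\nu_n$; on $\{|t|\ge\delta\sqrt n\}$, Cramér's condition together with the continuity of $\psi_{X_1}$ gives $\sup_{|u|\ge\delta}|\psi_{X_1}(u)| = \rho<1$, hence $|\psi_{X_1}(t/\sqrt n)|^n\le\rho^n$ is super-polynomially small; the passage from characteristic functions to set probabilities is the (multivariate) Esseen smoothing inequality. If $\Sigma$ is degenerate, first restrict to the affine subspace it spans; $\sigma^2 = \nabla g(\mu)^T\Sigma\nabla g(\mu) > 0$ by condition 3 guarantees the relevant direction survives.

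\textbf{Step 3 (transfer through $g$, and the obstacle).} Write $\{\widetilde T_n\le x\} = \{S_n\in A_{n,x}\}$ with $A_{n,x} = \{\,u : \sum_{k=1}^{q-1} n^{-(k-1)/2}Q_k(u)\le x\,\}$; since $Q_1$ is linear and $\sigma>0$, $A_{n,x}$ is an $\mathcal O(n^{-1/2})$-perturbation of the half-space $\{\nabla g(\mu)^T u\le x\sigma\}$ with smooth, uniformly curved boundary, so Step 2 applies to $A_{n,x}$ uniformly in $x$. Integrating $\nu_n$ over $A_{n,x}$, then changing variables to straighten the boundary (which generates further $n^{-1/2}$ corrections from the $Q_k$ with $k\ge 2$) and integrating out the directions orthogonal to $\nabla g(\mu)$, the leading term is $\Phi(x)$ and every correction collects into the form $n^{-j/2}p_j(x)\phi(x)$; the degree count — $\deg\pi_j = 3j$, $\deg Q_k\le k$, each extra $Q_k$ factor costing $n^{-(k-1)/2}$ — gives $\deg p_j\le 3j$, with coefficients depending only on the first $j+2$ moments of $X_1$ and the partial derivatives of $g$ at $\mu$. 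The main obstacle is Step 2: the uniform control of the Fourier inversion integral over intermediate and high frequencies using only Cramér's condition (exactly the point that fails for semi-lattice variables and motivates the rest of the paper), together with the algebra of the Cramér--Edgeworth polynomials needed to certify the degree and moment-dependence claims; Steps 1 and 3 are, by comparison, routine Taylor expansion and change-of-variables bookkeeping.
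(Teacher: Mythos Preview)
The paper does not prove this theorem: it is quoted verbatim as a classical result, attributed to Hall (\emph{The Bootstrap and Edgeworth Expansion}) and to Shao--Tu, and is used only as a black box (in particular to obtain Lemma~\ref{EdgMean}). So there is no ``paper's own proof'' to compare against.

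That said, your sketch is exactly the classical route taken in those references (and in Bhattacharya--Rao / Bhattacharya--Ghosh): Taylor-expand $g$ around $\mu$ to reduce to a polynomial in $S_n$, invoke the multivariate Edgeworth expansion for $S_n$ via Fourier inversion plus Esseen smoothing (this is where Cram\'er's condition is spent), then integrate the signed Edgeworth density over the perturbed half-spaces $A_{n,x}$ and regroup in powers of $n^{-1/2}$. Your identification of Step~2 as the load-bearing part is accurate. Two places where your write-up would need tightening if you were to flesh it out: the bound $\mathbb{P}(E_n^c)=\mathcal O(n^{-(q-1)/2})$ does not follow from a bare Markov inequality with $q$ moments (one typically uses a Fuk--Nagaev or Rosenthal-type inequality, or the truncation built into the Bhattacharya--Rao machinery); and in Step~3 the ``uniformly curved boundary'' language is misleading --- the boundary of $A_{n,x}$ is asymptotically flat, and the actual argument (as in Hall, \S2.7, or Bhattacharya--Ghosh) proceeds by expanding the indicator of $A_{n,x}$ around the linear half-space rather than by appealing to curvature.
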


This theorem is a powerful tool to prove existence of Edgeworth expansions for a variety of different statistics. However, one of its hypothesis if quite restrictive in many applied problems : Cramér's condition.  Cramér's condition is strongly linked with the notion of lattice distributions, that we define below.

\begin{defn}[Lattice distribution]
    A random $d$-vector $X$ is said to have a lattice distribution if it takes values almost surely in a lattice $\{ \mathbf{x_0} + k \boldsymbol{\delta}, ~ k \in \mathbb{Z} \}$ spanned by $\mathbf{x_0}, \boldsymbol{\delta} \in \mathbb{R}^d$.
\end{defn}

\begin{example}~
    \begin{enumerate}
        \item The Bernoulli distribution in $d=1$ is a lattice distribution, because it takes values in $\{0, 1\}$ which is included in the lattice $\{0 + k \times 1, k \in \mathbb{Z} \}$.
        \item If we take the uniform distribution on the Boolean hypercube $\{0,1\}^d$, then it is a lattice distribution of dimension $d$.
        \item All absolutely continuous distributions with respect to Lebesgue measure are not lattice.
    \end{enumerate}
\end{example}

For $d=1$, the Cramér's condition is equivalent to the fact that $X$ has a non lattice distribution. For instance, Cramér's condition is always true when $X$ is absolutely continuous with respect to Lebesque measure. However, in the case where $d > 1$ we no longer have the equivalence between Cramér's condition and classical lattice distributions. 

This is what motivates the following definition of semi-lattice distribution, which is equal to lattice distribution for $d=1$, and that we will show is also equivalent to Cramér's condition in any finite dimension. This definition also allows for a more geometrical view of multivariate Cramér's condition : there is no direction such that the distribution is lattice.

\begin{defn}[Semi-lattice distribution]\label{semi-lattice}
    A random $d$-vector $X$ is said to have a semi-lattice distribution if there exists a vector $\mathbf{t^*} \in \mathbb{R}^d \setminus \{0\}$ such that $\mathbf{t^*} \cdot X$ takes values almost surely in a lattice $\{x_0 + k\delta, ~k\in\mathbb{Z}\}$ spanned by $x_0, \delta \in \mathbb{R}$.
\end{defn}

\begin{remark}~
    \begin{enumerate}
        \item When $d=1$, all semi-lattice distributions are discrete, but not all discrete distributions are semi-lattice (e.g. $X \in \{e, 3, \pi\}$ uniformly).
        \item When $d > 1$, semi-lattice distributions are not necessarily discrete. For example, the random vector $(N, 4N)$ where $N \sim \mathcal{N}(0, 1)$ is semi-lattice as $(-4, 1) \cdot (N, 4N) = 0$.
    \end{enumerate}
\end{remark}

Now we prove the equivalence between semi-lattice behavior and Cramér's condition. More precisely, Proposition \ref{Prop4.3} shows that all semi-lattice distributions do not verify Cramér's condition, and on the other hand a consequence of Proposition \ref{Prop2.7} is that if the Cramér's condition doesn't hold, then the distribution is necessary semi-lattice.

\begin{proposition}\label{Prop4.3}
    If a random $d$-vector $X$ has a semi-lattice distribution, then there exists a vector $\mathbf{t^*} \in \mathbb{R}^d \setminus \{0\}$ such that $r \mapsto |\psi_X(r\mathbf{t^*})|$ is periodic. In particular, it means that we have $\underset{||\mathbf{t}|| \to \infty}{\limsup} ~  |\psi_{X}(\mathbf{t})| = 1$.
\end{proposition}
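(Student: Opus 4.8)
The plan is to reduce this $d$-dimensional claim to the elementary one-dimensional fact that the characteristic function of an integer-valued random variable is $2\pi$-periodic. First I would invoke Definition \ref{semi-lattice} to fix a vector $\mathbf{t}^* \in \rr^d \setminus \{0\}$ such that the real random variable $Z := \mathbf{t}^* \cdot X$ takes values almost surely in a lattice $\{x_0 + k\delta : k \in \mathbb{Z}\}$ with $x_0, \delta \in \rr$. If $\delta = 0$, then $Z = x_0$ almost surely, so $|\psi_X(r\mathbf{t}^*)| = |\E{e^{irZ}}| = |e^{irx_0}| = 1$ for every $r$, which is (trivially) periodic and already yields the ``in particular'' conclusion; hence from now on I assume $\delta \neq 0$.

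Next, since the map $k \mapsto x_0 + k\delta$ is a bijection from $\mathbb{Z}$ onto the support lattice, I would write $Z = x_0 + \delta W$ for an integer-valued random variable $W$, and compute, for every $r \in \rr$,
\begin{align*}
    \psi_X(r\mathbf{t}^*) = \E{e^{i r \mathbf{t}^* \cdot X}} = \E{e^{i r Z}} = e^{i r x_0} \, \E{e^{i r \delta W}} = e^{i r x_0} \, \psi_W(r\delta),
\end{align*}
where $\psi_W$ denotes the characteristic function of $W$; taking moduli gives $|\psi_X(r\mathbf{t}^*)| = |\psi_W(r\delta)|$. The key step is then the periodicity of $\psi_W$: because $W$ is integer-valued we have $e^{2\pi i W} = 1$ almost surely, so $\psi_W(s + 2\pi) = \E{e^{isW} e^{2\pi i W}} = \psi_W(s)$ for all $s \in \rr$. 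Therefore $r \mapsto |\psi_W(r\delta)| = |\psi_X(r\mathbf{t}^*)|$ is periodic, with period $2\pi / |\delta|$, which is the first assertion.

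For the ``in particular'' part, I would note that $|\psi_W(0)| = 1$, so by periodicity $|\psi_X(\tfrac{2\pi k}{\delta}\mathbf{t}^*)| = |\psi_W(2\pi k)| = 1$ for every $k \in \mathbb{Z}$. The vectors $\mathbf{t}_k := \tfrac{2\pi k}{\delta}\mathbf{t}^*$ satisfy $||\mathbf{t}_k|| \to \infty$ as $k \to \infty$, hence $\underset{||\mathbf{t}|| \to \infty}{\limsup}\, |\psi_X(\mathbf{t})| \geq 1$; combined with the general bound $|\psi_X(\mathbf{t})| \leq 1$, this forces the $\limsup$ to equal $1$. I do not expect a genuine obstacle here: the only points needing a little care are the degenerate case $\delta = 0$ and justifying the representation $Z = x_0 + \delta W$ with $W$ integer-valued, after which everything follows from the standard periodicity of characteristic functions of integer-valued random variables.
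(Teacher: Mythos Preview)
Your proof is correct and follows essentially the same approach as the paper's: both fix the lattice direction $\mathbf{t}^*$, factor out the phase $e^{irx_0}$, and use the $2\pi$-periodicity coming from the integer-valued part to obtain period $2\pi/|\delta|$ and hence the $\limsup = 1$ conclusion. Your version is in fact slightly more careful, as you explicitly dispose of the degenerate case $\delta = 0$ and package the computation via the integer-valued variable $W$ rather than expanding the sum directly, but the underlying idea is identical.
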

\begin{proof}
    Suppose that we have a $\mathbf{t^*} \neq 0$ such that $\mathbf{t^*} \cdot X \in \{x_0 + k\delta, ~k\in\mathbb{Z}\}$ almost surely. Then, denoting $\mu_X$ the law of $X$, we can write the function $r \mapsto |\psi_X(r\mathbf{t^*})|$ as 
    \begin{align*}
        |\psi_X(r\mathbf{t^*})| = \left| \int_{\mathbb{R}^d} e^{i r\mathbf{t^*}\cdot \mathbf{x}} d\mu_X(\mathbf{x}) \right| &= \left| \sum_{k\in\mathbb{Z}} e^{ir(x_0 + k\delta)} \P{\mathbf{t^*} \cdot X = x_0 + k\delta} \right|\\
        &= \left| \sum_{k\in\mathbb{Z}} e^{irk\delta} \P{\mathbf{t^*} \cdot X = x_0 + k\delta} \right|
    \end{align*}
    Let us fix $r^* \in \mathbb{R}$, and consider the particular case $r_n = \frac{2n\pi}{\delta} + r^*$. Using the equality above, we find that
    \begin{align*}
        |\psi_X(r_n\mathbf{t^*})| = \left| \sum_{k\in\mathbb{Z}} e^{2ink}e^{ir^*k\delta} \P{\mathbf{t^*} \cdot X = x_0 + k\delta} \right| &= \left| \sum_{k\in\mathbb{Z}} e^{ir^*k\delta} \P{\mathbf{t^*} \cdot X = x_0 + k\delta} \right|\\
        &= |\psi_X(r^*\mathbf{t^*})|
    \end{align*}
    Therefore the function $r \mapsto |\psi_X(r\mathbf{t^*})|$ is $\frac{2\pi}{\delta}$-periodic. In particular, setting $\mathbf{t_n} = \frac{2n\pi}{\delta} \mathbf{t^*}$, it means that $\underset{||\mathbf{t}|| \to \infty}{\limsup} ~  |\psi_{X}(\mathbf{t})| \geq \underset{n \to \infty}{\lim} ~ |\psi_{X}(\mathbf{t_n})| = 1$ and so $\underset{||\mathbf{t}|| \to \infty}{\limsup} ~  |\psi_{X}(\mathbf{t})| = 1$.
\end{proof}

\begin{proposition}\label{Prop2.7}
    Let $X$ be a random $d$-vector admitting a characteristic function $\phi_X$. Suppose that there exists a $t^* \neq 0$ such that $|\phi_X(t^*)| = 1$. Then the distribution of $X$ is semi-lattice.
\end{proposition}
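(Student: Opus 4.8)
The plan is to exploit the equality case in the elementary bound $|\phi_X(\mathbf{t})| \le 1$, which forces the integrand of the characteristic function to be constant on the support of $X$. Concretely, since $|\phi_X(t^*)| = 1$, I would first write $\phi_X(t^*) = e^{i\theta}$ for some real $\theta$, so that multiplying by $e^{-i\theta}$ yields
\begin{align*}
    \E{e^{i\left(t^* \cdot X - \theta\right)}} = 1 .
\end{align*}
Taking real parts gives $\E{\cos\left(t^* \cdot X - \theta\right)} = 1$, hence $\E{1 - \cos\left(t^* \cdot X - \theta\right)} = 0$.

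Next I would invoke the fact that a non-negative random variable with zero expectation vanishes almost surely: since $1 - \cos\left(t^* \cdot X - \theta\right) \ge 0$ almost surely, we get $\cos\left(t^* \cdot X - \theta\right) = 1$ almost surely. Using that $\cos u = 1$ if and only if $u \in 2\pi\mathbb{Z}$, this says $t^* \cdot X - \theta \in 2\pi\mathbb{Z}$ almost surely, i.e. $t^* \cdot X \in \{\theta + 2\pi k : k \in \mathbb{Z}\}$ almost surely. This is precisely a one-dimensional lattice $\{x_0 + k\delta : k \in \mathbb{Z}\}$ with $x_0 = \theta$ and $\delta = 2\pi$, and since $t^* \neq 0$ by hypothesis, Definition \ref{semi-lattice} shows that the distribution of $X$ is semi-lattice. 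Combined with Proposition \ref{Prop4.3}, this establishes the full equivalence between failure of Cramér's condition and the semi-lattice property.

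I do not expect a genuine obstacle here: the argument is a short chain of manipulations of the characteristic function. The only step that must be stated carefully is the measure-theoretic one — that a non-negative integrand with vanishing integral is zero almost everywhere — together with the observation that $1 - \cos$ is non-negative precisely because $\cos$ is bounded above by $1$; everything else is algebra.
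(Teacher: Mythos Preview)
Your proof is correct and follows essentially the same argument as the paper: write $\phi_X(t^*)=e^{i\theta}$, take real parts to get $\E{\cos(t^*\cdot X-\theta)}=1$, and use non-negativity of $1-\cos$ to force $t^*\cdot X\in\theta+2\pi\mathbb{Z}$ almost surely. If anything, your write-up is slightly cleaner, since you state the inequality $1-\cos\ge 0$ with the correct sign and spell out the lattice parameters $x_0=\theta$, $\delta=2\pi$ explicitly.
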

\begin{proof}
    If $|\phi_X(t^*)|=1$, it means that there exists a $\theta \in \mathbb{R}$ such that $\phi_X(t^*)=e^{i\theta}$. Rewriting $\phi_X$ by its expression and dividing both sides by $e^{i\theta}$, it implies that $\E{e^{i(t^* \cdot X - \theta)}} = 1$. Therefore, if we use the decomposition of $e^{it}$ as cosinus and sinus, we obtain easily that $\E{\cos(t^* \cdot X - \theta)}=1$. But $\cos(t^* \cdot x - \theta) - 1 \geq 0$ for all vector $x \in \mathbb{R}^d$, thus this equality is possible if and only if $\cos(t^* \cdot X - \theta) = 1$ almost surely. And so we have necessarily $t^* \cdot X = \theta + k2\pi$ for $k \in \mathbb{Z}$, meaning that $X$ is semi-lattice.
\end{proof}

This equivalence between semi-lattice and Cramér's condition gives a convenient way to verify if we can apply Theorem \ref{Edgeworth_Ttm} on some random vector without having to manipulate its characteristic function. For instance, suppose that we would like to use Theorem \ref{Edgeworth_Ttm} to show existence of an Edgeworth expansion for the Bernoulli weighted mean $\hat{\mu} = \frac{\sum_{i=1}^n Y_i T_i}{\sum_{i=1}^n T_i}$. This statistic can indeed be expressed as a function of samples means by noticing that $\hat{\mu} = \overline{YT} / \overline{T} = g(\overline{YT}, \overline{T})$ where $g : x,y \mapsto x/y$, so we are in the framework of the theorem. However, according to the following Lemma \ref{Lemma8.1}, the distribution of $(YT, T)$ doesn't follow Cramér's condition, hence we unfortunately cannot apply Theorem \ref{Edgeworth_Ttm} for Bernoulli weighted means.

\begin{lemma}\label{Lemma8.1}
    Let $Y$ be an arbitrary random variable and $T$ a Bernoulli distributed random variable with parameter $p$. Then $\underset{||\mathbf{t}|| \to \infty}{\limsup} ~  |\psi_{(YT,T)}(\mathbf{t})| = 1$ where $\psi_{(YT,T)}$ is the characteristic function of the vector $(YT, T)$.
\end{lemma}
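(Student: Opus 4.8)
The plan is to notice that, no matter how $Y$ is distributed, the second coordinate of $(YT,T)$ is the Bernoulli variable $T$, which by itself is lattice-valued since it is supported on $\{0,1\}\subset\mathbb{Z}$. Hence I would first check that $(YT,T)$ is semi-lattice in the sense of Definition \ref{semi-lattice}: taking the nonzero vector $\mathbf{t^*}=(0,1)\in\rr^2$, the scalar $\mathbf{t^*}\cdot(YT,T)=T$ takes values almost surely in the lattice $\{0+k\cdot 1,\ k\in\mathbb{Z}\}$ spanned by $x_0=0$ and $\delta=1$. Once this is established, the conclusion is immediate from Proposition \ref{Prop4.3}, which says precisely that a semi-lattice distribution must have $\limsup_{\|\mathbf{t}\|\to\infty}|\psi_X(\mathbf{t})|=1$.

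If one prefers a self-contained argument that does not route through Proposition \ref{Prop4.3}, I would instead evaluate the characteristic function along the explicit sequence $\mathbf{t}_n=(0,2\pi n)$, whose norm tends to infinity. Since $T$ is integer-valued we have $e^{2i\pi n T}=1$ almost surely, so
\begin{align*}
    \left|\psi_{(YT,T)}(\mathbf{t}_n)\right| = \left|\E{e^{i\,\mathbf{t}_n\cdot(YT,T)}}\right| = \left|\E{e^{2i\pi n T}}\right| = 1 .
\end{align*}
Together with the universal bound $|\psi_{(YT,T)}(\mathbf{t})|\le 1$ valid for all $\mathbf{t}$, this forces the $\limsup$ to equal $1$.

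I do not expect any genuine obstacle here: the lemma is essentially a one-line consequence of the fact that $T$ is a lattice variable, which drags the whole vector into the semi-lattice class independently of $Y$. The only point worth stressing is the conceptual one behind the statement — introducing a (possibly absolutely continuous) non semi-lattice variable $Y$ through the map $(Y,T)\mapsto(YT,T)$ still yields a vector that fails Cramér's condition, so Theorem \ref{Edgeworth_Ttm} genuinely cannot be applied to the Bernoulli weighted mean, and the more delicate conditioning argument on $T_1,\dots,T_n$ used later in the paper is needed instead.
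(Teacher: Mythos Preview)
Your proof is correct and follows exactly the paper's approach: take $\mathbf{t^*}=(0,1)$, observe that $\mathbf{t^*}\cdot(YT,T)=T$ is lattice-valued, and invoke Proposition~\ref{Prop4.3}. The additional self-contained computation along $\mathbf{t}_n=(0,2\pi n)$ is a nice bonus but not needed.
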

\begin{proof}
    Setting $t^* = (0, 1)$, we obtain $t^* \cdot (YT, T) = T$ which takes values almost surely in the lattice $\{0,1\}$, meaning that the vector $(YT, T)$ has a semi-lattice distribution. As such, we can use Prop \ref{Prop4.3} to conclude that $\underset{||\mathbf{t}|| \to \infty}{\limsup} ~  |\psi_{(YT,T)}(\mathbf{t})| = 1$.
\end{proof}

\section{Proof of Theorem \ref{MainTheorem}}\label{proof}

We recall that we define the Bernoulli weighted average $\hat{\mu}$ as
\begin{align*}
    \hat{\mu} = \begin{cases}
        \dfrac{\sum_{i=1}^n Y_i T_i}{\sum_{i=1}^n T_i} & \text{if $\sum_{i=1}^n T_i \neq 0$}\\
        \hfil
        0 & \text{otherwise}
    \end{cases}
\end{align*}
where $Y_1, \dots, Y_n$ are i.i.d non semi-lattice random variables, and $T_1, \dots, T_n$ are Bernoulli random variables with parameter $p > 0$. Furthermore, we let $N = \sum T_i$ and $Z = \sqrt{N}\frac{\hat{\mu} - \mu}{\sigma}$ where $\mu$ and $\sigma^2$ are the mean and variance of $Y$ respectively. For any $k = 1, \dots, n$, we also define $\overline{Y}_k = \frac{1}{k} \sum_{i=1}^k$ the empirical mean of sample size $k$.

Before proceeding through the proof of Theorem \ref{MainTheorem}, we first need to state and prove some intermediate useful results. The following Lemma allows us to reduce the problem of expanding the whole c.d.f. of $Z$, which depend both on the $Y_i$'s and the $T_i$'s, to a problem of approximation for each $k = 1, \dots, n$ the easier c.d.f. $F_{Z_k}(x) := \P{ \sqrt{k} \frac{\overline{Y}_k - \mu}{\sigma} \leq x}$ involving only the $Y_i$'s.

\begin{lemma}\label{Lemma8.2}
    For every real $\alpha$, the cumulative distribution function $F_Z$ of $Z$ can be written as
        \begin{align*}
            F_Z(x) = \sum_{k=1}^n \binom{n}{k} p^k (1-p)^{n-k} \P{\sqrt{k} \dfrac{\overline{Y}_k - \mu}{\sigma} \leq x} + \O(n^\alpha)
        \end{align*}
    uniformly in $x$.
\end{lemma}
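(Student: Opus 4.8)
The plan is to condition on $N$ and apply the law of total probability. Since $T_1,\dots,T_n$ are i.i.d.\ Bernoulli$(p)$, the count $N=\sum_{i=1}^n T_i$ is Binomial$(n,p)$, so $\P{N=k}=\binom{n}{k}p^k(1-p)^{n-k}$ for $k=0,\dots,n$, and
\begin{align*}
F_Z(x)=\sum_{k=0}^n \binom{n}{k}p^k(1-p)^{n-k}\,\P{Z\le x\mid N=k}.
\end{align*}
Everything then reduces to identifying the conditional c.d.f.\ $\P{Z\le x\mid N=k}$.

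For $k\ge 1$: conditionally on $\{T_i=t_i,\ i=1,\dots,n\}$ with $\sum_i t_i=k$, one has $\hat\mu=\frac1k\sum_{i\in S}Y_i$ with $S=\{i:t_i=1\}$, $|S|=k$. Because the $Y_i$ are i.i.d.\ and independent of $(T_i)_i$, for each fixed $S$ of size $k$ the variable $\frac1k\sum_{i\in S}Y_i$ has the same law as $\overline{Y}_k=\frac1k\sum_{i=1}^k Y_i$; since all admissible configurations of $(t_i)$ with $\sum_i t_i=k$ give the same conditional law, this persists after conditioning only on $\{N=k\}$. Hence $Z=\sqrt N\,\frac{\hat\mu-\mu}{\sigma}$ satisfies $\P{Z\le x\mid N=k}=\P{\sqrt k\,\frac{\overline{Y}_k-\mu}{\sigma}\le x}=F_{Z_k}(x)$. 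For $k=0$: on $\{N=0\}$ we have $\sqrt N=0$, so $Z=0$ and $\P{Z\le x\mid N=0}=\mathbbm{1}\{x\ge 0\}$.

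Combining the two cases gives
\begin{align*}
F_Z(x)=(1-p)^n\,\mathbbm{1}\{x\ge 0\}+\sum_{k=1}^n \binom{n}{k}p^k(1-p)^{n-k}F_{Z_k}(x),
\end{align*}
so that $\sup_{x\in\rr}\bigl|F_Z(x)-\sum_{k=1}^n\binom{n}{k}p^k(1-p)^{n-k}F_{Z_k}(x)\bigr|=(1-p)^n$, a bound that does not depend on $x$. Finally, since $p>0$ we have $0\le 1-p<1$, and for every real $\alpha$ one has $(1-p)^n/n^{\alpha}=\exp\!\bigl(n\ln(1-p)-\alpha\ln n\bigr)\to 0$ as $n\to\infty$ (exponential decay beats any power of $n$), so $(1-p)^n\le C n^\alpha$ for all $n$ large enough; this is precisely the claimed uniform $\O(n^\alpha)$ bound.

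The argument is essentially routine; the only point requiring a little care is the conditional-law identity of the second step, which relies on the independence of $(Y_i)_i$ from $(T_i)_i$ together with the exchangeability of the $Y_i$. The entire strength of the statement — that the error is $\O(n^\alpha)$ for \emph{every} real $\alpha$, not just a fixed negative power — comes from the exponential smallness of the discarded $\{N=0\}$ mass.
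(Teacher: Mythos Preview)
Your proof is correct and follows essentially the same approach as the paper: condition on the Bernoulli variables, use the i.i.d.\ structure of the $Y_i$'s to identify the conditional law of $Z$ given $N=k$ with that of $\sqrt{k}(\overline{Y}_k-\mu)/\sigma$, treat the $k=0$ term separately as an indicator, and absorb the resulting $(1-p)^n$ remainder into $\O(n^\alpha)$ via exponential decay. Your write-up is slightly more explicit about the independence of $(Y_i)_i$ from $(T_i)_i$ and about why the conditional law depends only on $|S|$ and not on the particular index set $S$, which is a welcome clarification.
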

\begin{proof}
    Let $q_1, \dots, q_n \in \{0,1\}$ where not all $q_i$'s are $0$. Then, by conditioning we have
    \begin{align}\label{Lemma8.2.1}
        \P{Z \leq x ~|~ T_1=q_1, \dots, T_n=q_n} = \P{\dfrac{\sqrt{\sum q_i}}{\sigma} \left[\frac{1}{\sum q_i} \sum_{q_i = 1} Y_i - \mu\right] \leq x}
    \end{align}
    Now, the idea is to notice that $\sum_{i=1}^{\sum q_i} Y_i$ has the same distribution as $\sum_{q_i=1} Y_i$ due to the fact that the $Y_i$'s are independent and of the same law. Therefore, if we define $k = \sum q_i$, we can rewrite \eqref{Lemma8.2.1} as 
    \begin{align}\label{condBern}
        \P{Z \leq x ~|~ T_1=q_1, \dots, T_n=q_n} = \P{ \sqrt{k} \dfrac{\overline{Y}_k - \mu}{\sigma} \leq x}
    \end{align}
    Now, we observe that the right-hand side of \eqref{condBern} only depends on $k$, meaning that the conditional c.d.f. is invariant by permutations of the $q_i$'s. Thus, by defining $N = \sum T_i$, we obtain
    \begin{align*}
        \P{Z \leq x ~|~ N=k} = \P{ \sqrt{k} \dfrac{\overline{Y}_k - \mu}{\sigma} \leq x}
    \end{align*}
    In the case where $q_1=\dots=q_n=0$, we have instead $\P{Z \leq x ~|~ N=0} = \P{0 \leq x} = \mathbb{I}\{x \geq 0\}$.
    Hence, by the law of total expectation, the c.d.f. of $Z$ can be expressed as 
    \begin{align*}
        F_Z(x) &= \sum_{k=0}^n \P{N=k} \P{Z \leq x ~|~ N = k}\\
        &= \sum_{k=1}^n \binom{n}{k} p^k (1-p)^{n-k} \P{ \sqrt{k} \dfrac{\overline{Y}_k - \mu}{\sigma} \leq x} + \mathbb{I}\{x \geq 0\} (1-p)^n
    \end{align*}
    where we have used that $N$ follows a binomial distribution with parameters ($n$, $p$). To obtain the result it only remains to show that for all real $\alpha$, $\mathbb{I}\{x \geq 0\} (1-p)^n = \O(n^\alpha)$ uniformly in $x$, which is immediate because $|\mathbb{I}\{x \geq 0\}| \leq 1$ and $(1-p)^n$ decays to $0$ faster than $n^\alpha$.
\end{proof}

Hence, now we can focus ourselves on the existence of Edgeworth expansions for each c.d.f. $F_{Z_k}$, which is what we prove in the following Lemma.

\begin{lemma}\label{EdgMean}
    Suppose that $Y$ has at least $q$ finite moments. Then, for all positive integer $k$, $\overline{Y}_k$ admits the Edgeworth expansion
    \begin{align*}
        \P{ \sqrt{k} \dfrac{\overline{Y}_k - \mu}{\sigma} \leq x} = \Phi(x) + \sum_{j=1}^{q-2} k^{-j/2} p_j(x) \phi(x) + \O\left(k^{-(q-1)/2}\right)
    \end{align*}
    uniformly in $x$, where $p_j$ is a polynomial of degree at most $3j$ and depending only on the first $j-2$ moments of $Y$.
\end{lemma}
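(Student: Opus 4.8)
The plan is to recognize that Lemma \ref{EdgMean} is nothing but the classical Edgeworth expansion of Theorem \ref{Edgeworth_Ttm} specialized to $d=1$ with the function $g$ taken to be the identity map $g(t)=t$. Indeed, the law of $\sqrt{k}\,\frac{\overline{Y}_k-\mu}{\sigma}$ depends only on the common distribution of the $Y_i$'s and on the index $k$, so an expansion "for all positive integer $k$" — in the sense of the definition of uniform bounds, i.e. for all $k$ beyond some threshold $N$ — is exactly the conclusion of Theorem \ref{Edgeworth_Ttm} applied to the i.i.d. sequence $(Y_i)$ with $n$ replaced by $k$ and with this linear $g$.

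It then remains to verify the four hypotheses of Theorem \ref{Edgeworth_Ttm}. First, $Y=Y_1$ has at least $q\geq 3$ finite moments by assumption (for $q=2$ the sum is empty and the claim reduces to the Berry--Esseen bound, which also holds under Cramér's condition). Second, $g(t)=t$ is $\mathcal{C}^\infty$ on all of $\rr$, hence smooth enough on a neighborhood of $\mu$. Third, $\nabla g(\mu)=g'(\mu)=1\neq 0$, and consequently $\sigma^2=\nabla g(\mu)^T \mathrm{Var}(Y)\,\nabla g(\mu)=\mathrm{Var}(Y)$, so the normalization appearing in Theorem \ref{Edgeworth_Ttm} coincides with the one in the Lemma.

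The only nontrivial point is the fourth hypothesis, Cramér's condition $\limsup_{\lVert \mathbf{t}\rVert\to\infty}|\psi_Y(\mathbf{t})|<1$. Here I would invoke the equivalence between the semi-lattice property and the failure of Cramér's condition established in Section \ref{Preliminary}: in dimension $d=1$ a semi-lattice distribution is precisely a lattice distribution, and by (the contrapositive of) Proposition \ref{Prop2.7}, any distribution admitting a point $t^*\neq 0$ with $|\psi_Y(t^*)|=1$ is semi-lattice. Combining this with the standard one-dimensional fact that $\limsup_{|t|\to\infty}|\psi_Y(t)|=1$ already forces the existence of such a point $t^*$ (equivalently, forces $Y$ to be lattice), the hypothesis that $Y$ is non semi-lattice yields $\limsup_{|t|\to\infty}|\psi_Y(t)|<1$, so Cramér's condition holds.

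Applying Theorem \ref{Edgeworth_Ttm} then gives the stated expansion, with polynomials $p_j$ of degree at most $3j$ whose coefficients depend only on the first $j+2$ moments of $Y$ (the partial derivatives of the linear map $g$ at $\mu$ being trivial) and with error $\O(k^{-(q-1)/2})$ uniform in $x$, which is exactly the claimed form. The main — and essentially only — obstacle is the verification of Cramér's condition from the non semi-lattice assumption; I expect this to be a short argument that cites the semi-lattice/Cramér equivalence of the previous section together with the classical characterization of one-dimensional lattice distributions.
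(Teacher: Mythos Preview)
Your proposal is correct and follows exactly the paper's approach: the paper's proof is the single sentence that this is the special case of Theorem \ref{Edgeworth_Ttm} with $g(x)=x$, together with a pointer to \cite[Chapter~2.2]{hall1997bootstrap} for a direct argument. Your more careful verification of the hypotheses---in particular deriving Cram\'er's condition from the standing non semi-lattice assumption via the equivalence of Section \ref{Preliminary}---is a welcome elaboration that the paper leaves implicit.
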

\begin{proof}
    This a special case of Theorem \ref{Edgeworth_Ttm} where $g = x \mapsto x$. Another direct proof of this result is conducted in \cite[Chapter~2.2]{hall1997bootstrap}.
\end{proof}

Putting together this Edgeworth expansion result with the expression of Lemma \ref{Lemma8.2}, we will find ourselves with sums of the form $\sum_{k=1}^n k^\alpha \binom{n}{k} p^k (1-p)^k$ with $\alpha$ real, that we call the Bernoulli sum of $k^\alpha$. In what follows, we will provide bounds for these Bernoulli sums in order to control exactly the error terms of the asymptotic expansions.

First, we prove a simple bound on the expectation of powers of binomial random variables, result that was already shown in \cite{Inverse_Moments_2000} and \cite{Znidaric_2009}, but we provide an easier and shorter proof based on the Kullback-Leibler upper-bound for binomial random variables.

\begin{proposition}\label{Prop:BinomBound}
    Let $N$ be a binomial random variable with parameters $(n, p)$ where $p \in (0, 1/2]$. Then, for all $\alpha \in \rr$ we have
    \begin{align*}
        \E{(N+1)^\alpha} = \O(n^\alpha)
    \end{align*}
\end{proposition}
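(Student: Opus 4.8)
The plan is to separate according to the sign of $\alpha$. When $\alpha \ge 0$ the estimate is immediate: since $0 \le N \le n$ almost surely, $(N+1)^\alpha \le (n+1)^\alpha$, and $(n+1)^\alpha = \O(n^\alpha)$, so $\E{(N+1)^\alpha} = \O(n^\alpha)$. The substance is the case $\alpha < 0$, where $(N+1)^\alpha$ is largest exactly when $N$ is close to $0$, so everything reduces to showing that the lower tail of the binomial is negligible. Note also that it suffices to prove the bound for $n$ large (say $n \ge 2/p$, so that $np/2 \ge 1$), since the $\O$ only needs to hold eventually.

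Assume $\alpha < 0$. I would split the expectation over a bulk event and a lower-tail event,
\begin{align*}
    \E{(N+1)^\alpha} = \E{(N+1)^\alpha \mathbb{I}\{N > np/2\}} + \E{(N+1)^\alpha \mathbb{I}\{N \le np/2\}} .
\end{align*}
On $\{N > np/2\}$ we have $N+1 > np/2$, and since $t \mapsto t^\alpha$ is decreasing for $\alpha < 0$, this gives $(N+1)^\alpha < (np/2)^\alpha = (p/2)^\alpha n^\alpha$; hence the first term is at most $(p/2)^\alpha n^\alpha\, \P{N > np/2} \le (p/2)^\alpha n^\alpha = \O(n^\alpha)$. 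On $\{N \le np/2\}$ we simply use $N+1 \ge 1$, so $(N+1)^\alpha \le 1$, and the second term is bounded by $\P{N \le np/2}$.

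It remains to bound $\P{N \le np/2}$, and here I would invoke the Chernoff--Hoeffding bound for the lower tail of a binomial expressed through the Kullback--Leibler divergence: for $0 \le a \le p$,
\begin{align*}
    \P{N \le an} \le \exp\!\left(-n\, D(a \,\|\, p)\right), \qquad D(a\|p) = a \ln\frac{a}{p} + (1-a)\ln\frac{1-a}{1-p} .
\end{align*}
Taking $a = p/2 < p$ yields $\P{N \le np/2} \le e^{-cn}$ with $c = D(p/2\|p) > 0$ a constant depending only on $p$ (strict positivity holds because $p/2 \ne p$). An exponentially small quantity is $\O(n^\alpha)$ for every $\alpha$, so combining the two contributions via Proposition \ref{LinBounds} gives $\E{(N+1)^\alpha} = \O(n^\alpha)$.

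The only genuine ingredient is the Chernoff--KL inequality, whose short proof is the optimization of the exponential Markov bound $\P{N \le np/2} \le e^{\lambda np/2}\,(1 - p + pe^{-\lambda})^n$ over $\lambda \ge 0$; I expect this to be the only step requiring a computation, and it is elementary. I would also observe that the hypothesis $p \in (0,1/2]$ is not essential for this estimate --- for $p > 1/2$ the binomial concentrates above $n/2$ and the lower-tail bound is only easier --- so it can be kept purely for convenience.
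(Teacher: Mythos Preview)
Your proof is correct and follows essentially the same approach as the paper: split into the cases $\alpha \ge 0$ (trivial) and $\alpha < 0$, and in the latter case decompose the expectation over $\{N \le \delta n\}$ versus $\{N > \delta n\}$, bounding the lower-tail piece via the Chernoff--KL inequality and the bulk piece by $(\delta n)^\alpha$. The only cosmetic differences are that you fix $\delta = p/2$ while the paper leaves $\delta \in (0,p)$ arbitrary, and you write the split with indicators rather than conditional expectations; the invocation of Proposition~\ref{LinBounds} is unnecessary here (there is no $x$-dependence to worry about), but harmless.
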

\begin{proof}~\\
    \underline{If $\alpha \geq 0$} We have $N \leq n$, so $\E{(N+1)^\alpha} \leq (n+1)^\alpha$ and we obtain immediately the result by noticing that $(n+1)^\alpha = \O(n^\alpha)$.\\
    \underline{If $\alpha = -\beta < 0$} In order to prove the result, we will use the following classical inequality obtained by optimizing the Chernoff bound for a binomial distribution : for all $\delta \in (0, p)$, we have
    \begin{align}\label{BinKL}
        \P{N \leq \delta n} \leq e^{-n\mathcal{D}(\delta||p)}
    \end{align}
    where $\mathcal{D}$ is the Kullback-Leibler divergence between two Bernoulli distributions of parameters $\delta$ and $p$ respectively. Turning back to our former expectation, by conditioning on the event $\{N \leq \delta n\}$ we obtain
    \begin{align*}
        \E{(N+1)^\alpha} = \P{N \leq \delta n} \E{\dfrac{1}{(N+1)^\beta} ~|~ N \leq \delta n} + \P{N > \delta n}  \E{\dfrac{1}{(N+1)^\beta} ~|~ N > \delta n}
    \end{align*}
    Now, we can use \eqref{BinKL} to bound $\P{N \leq \delta n}$, and we also have $\E{\frac{1}{(N+1)^\beta} ~|~ N \leq \delta n} \leq 1$ due to $N + 1 \geq 1$, $\P{N > \delta n} \leq 1$ because it's a probability, and $\E{\frac{1}{(N+1)^\beta} ~|~ N > \delta n} \leq \frac{1}{(\delta n + 1)^\beta}$. All in all, we finaly obtain
    \begin{align*}
        \E{(N+1)^\alpha} \leq e^{-n\mathcal{D}(\delta||p)} + (\delta n + 1)^\alpha
    \end{align*}
    where the right-hand side is a $\O(n^\alpha)$ because the Kullback-Leibler divergence is strictly positive for $\delta < p$, proving the proposition.
\end{proof}

An immediate consequence of this bound is the following Lemma, controlling the speed of convergence of the Bernoulli sum of $k^\alpha$.

\begin{lemma}\label{BoundO}
    Let $p \in (0, 1/2]$ and $n \geq 1$. Then, for all $\alpha \in \rr$
    \begin{align*}
        \sum_{k=1}^n k^\alpha \binom{n}{k} p^k (1-p)^{n-k} \, \overset{(1)}{=} \, np \E{(N^* + 1)^{\alpha - 1}} \, \overset{(2)}{=} \, \O(n^\alpha)
    \end{align*}
    where $N^*$ is a binomial random variable with parameters $(n-1, p)$.
\end{lemma}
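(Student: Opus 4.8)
The plan is to prove the first equality by a purely combinatorial rewriting of the binomial coefficient together with an index shift, and then to deduce the $\O(n^\alpha)$ bound as an immediate corollary of Proposition \ref{Prop:BinomBound}. Nothing analytic is needed for the identity; its only content is $k\binom{n}{k} = n\binom{n-1}{k-1}$.

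For equality (1), I would start from the left-hand side, peel off one factor of $k$ and apply the identity above:
\begin{align*}
    \sum_{k=1}^n k^\alpha \binom{n}{k} p^k (1-p)^{n-k} = \sum_{k=1}^n k^{\alpha-1} \left( k\binom{n}{k} \right) p^k (1-p)^{n-k} = n\sum_{k=1}^n k^{\alpha-1} \binom{n-1}{k-1} p^k (1-p)^{n-k}.
\end{align*}
Substituting $j = k-1$, so that $j$ ranges over $0, \dots, n-1$, and factoring out one power of $p$ gives
\begin{align*}
    n\sum_{j=0}^{n-1} (j+1)^{\alpha-1} \binom{n-1}{j} p^{j+1} (1-p)^{(n-1)-j} = np \sum_{j=0}^{n-1} (j+1)^{\alpha-1} \binom{n-1}{j} p^{j} (1-p)^{(n-1)-j}.
\end{align*}
The remaining sum is exactly $\E{(N^*+1)^{\alpha-1}}$ for $N^* \sim \mathrm{Bin}(n-1,p)$, which is equality (1). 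In the degenerate case $n=1$ the sum reduces to its $j=0$ term, $N^*$ is the constant $0$, and both sides equal $p$, so the identity still holds.

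For equality (2), I would invoke Proposition \ref{Prop:BinomBound} with parameters $(n-1,p)$ — legitimate since $p \in (0,1/2]$ — and exponent $\alpha - 1$, obtaining $\E{(N^*+1)^{\alpha-1}} = \O\!\left((n-1)^{\alpha-1}\right)$. Writing $(n-1)^{\alpha-1} = n^{\alpha-1}(1-1/n)^{\alpha-1}$ and noting that $(1-1/n)^{\alpha-1}$ stays bounded for $n \geq 2$, we get $(n-1)^{\alpha-1} = \O\!\left(n^{\alpha-1}\right)$; multiplying by the constant factor $np$ then yields $np\,\E{(N^*+1)^{\alpha-1}} = \O(n^\alpha)$.

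There is no genuine obstacle here: the argument is a one-line algebraic identity followed by a single application of the preceding proposition. The only points that require a modicum of care are the bookkeeping in the index shift $j = k-1$ (in particular that the $k=0$ term is absent on the left and contributes the $j=0$ term after relabelling) and the treatment of the degenerate case $n=1$, both handled above.
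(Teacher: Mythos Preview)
Your proof is correct and follows essentially the same approach as the paper: both rely on the identity $k\binom{n}{k}=n\binom{n-1}{k-1}$ together with the index shift $j=k-1$ to establish equality (1), and both deduce equality (2) directly from Proposition~\ref{Prop:BinomBound} applied with parameters $(n-1,p)$. The only cosmetic difference is the direction of the computation---you go from the sum to the expectation, while the paper starts from the expectation and reaches the sum.
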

\begin{proof}
    Expanding the expectation and using a change of index, we obtain
    \begin{align*}
        \E{(N^* + 1)^{\alpha - 1}} &= \sum_{k=0}^{n-1} (k+1)^{\alpha - 1} \binom{n-1}{k} p^k (1-p)^{n-1-k}\\
        &= \sum_{k=1}^{n} k^{\alpha - 1} \binom{n-1}{k-1} p^{k-1} (1-p)^{n-k}\\
        &= \sum_{k=1}^{n} k^{\alpha - 1} \dfrac{k}{n} \binom{n}{k} \dfrac{1}{p} p^k (1-p)^{n-k}\\
        &= \dfrac{1}{np} \sum_{k=1}^n k^\alpha \binom{n}{k} p^k (1-p)^{n-k}
    \end{align*}
    which proves equality $(1)$. Now, by Proposition \ref{Prop:BinomBound} it is straightforward that $\E{(N^* + 1)^{\alpha - 1}} = \O\left((n-1)^{\alpha-1}\right) = \O(n^{\alpha-1})$, and so equality $(2)$ holds immediately by multiplying both side by $np$.
\end{proof}

However, in order to prove the Edgeworth expansion of Theorem \ref{MainTheorem} we will also need an even stronger result on binomial random variables, namely an asymptotic expansion for the inverse binomial moments.

\begin{defn}[Inverse binomial moment]
    For any real $\alpha > 0$, we define the inverse binomial moment $f_\alpha(n)$ of order $\alpha$ as 
    \begin{align*}
        f_\alpha(n) = \sum_{k=1}^n \dfrac{1}{k^\alpha} \binom{n}{k} p^k (1-p)^{n-k}
    \end{align*}
\end{defn}

The following result was first stated and proved in \cite{Znidaric_2009} with a general closed form for the constants of the expansion. 

\begin{proposition}\label{BinomExpansion}
    The inverse binomial moment $f_\alpha(n)$ can be expanded in terms of inverse powers of $(np)$, meaning that for any integer $q$ 
    \begin{align*}
        f_\alpha(n) = \dfrac{1}{(np)^\alpha} \left( \sum_{k=0}^{q-1} C_{\alpha,k} \dfrac{1}{(np)^k} \right) + \O\left(\dfrac{1}{(np)^{\alpha + q}}\right)
    \end{align*}
    where $C_{\alpha,k}$ are constants depending only on $\alpha$ and $p$.
\end{proposition}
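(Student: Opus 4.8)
The plan is to express $f_\alpha(n)$ as an expectation $\mathbb{E}\bigl[\mathbbm{1}\{N\geq 1\}\,N^{-\alpha}\bigr]$ where $N\sim\mathrm{Bin}(n,p)$, and to exploit the concentration of $N$ around its mean $np$. First I would split the sum at a threshold: for a fixed small $\eta\in(0,p)$, write $f_\alpha(n) = \sum_{k> \eta n} k^{-\alpha}\binom{n}{k}p^k(1-p)^{n-k} + \sum_{1\le k\le \eta n} k^{-\alpha}\binom{n}{k}p^k(1-p)^{n-k}$. The tail piece $k\le \eta n$ is controlled by the Chernoff/Kullback–Leibler bound \eqref{BinKL} exactly as in Proposition \ref{Prop:BinomBound}: since $k^{-\alpha}\le 1$ there, that sum is $\O(e^{-n\mathcal{D}(\eta\|p)})$, which is $\O\bigl((np)^{-(\alpha+q)}\bigr)$ for every $q$ and hence absorbed into the remainder. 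So everything reduces to the central range $k\in(\eta n, n]$.

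On the central range I would perform a Taylor expansion of $k^{-\alpha}$ about $np$. Writing $k = np + \sqrt{np(1-p)}\,W$ with $W = (k-np)/\sqrt{np(1-p)}$, we have
\begin{align*}
    \frac{1}{k^\alpha} = \frac{1}{(np)^\alpha}\left(1 + \frac{k-np}{np}\right)^{-\alpha} = \frac{1}{(np)^\alpha}\sum_{m=0}^{2q-1} \binom{-\alpha}{m}\left(\frac{k-np}{np}\right)^m + R_q(k),
\end{align*}
where the binomial-series remainder $R_q(k)$ satisfies $|R_q(k)| \le C\,|k-np|^{2q}(np)^{-\alpha-2q}$ on $k>\eta n$ (here $1+(k-np)/(np) = k/(np) > \eta/p$ is bounded away from $0$, so the series remainder is uniformly controlled). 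Multiplying through by $\binom{n}{k}p^k(1-p)^{n-k}$ and summing, the main term becomes $(np)^{-\alpha}\sum_{m=0}^{2q-1}\binom{-\alpha}{m}(np)^{-m}\,\mathbb{E}\bigl[(N-np)^m\,\mathbbm{1}\{N>\eta n\}\bigr]$, and the remainder term is bounded by $C(np)^{-\alpha-2q}\,\mathbb{E}\bigl[(N-np)^{2q}\bigr]$. Since the central absolute moments of a binomial satisfy $\mathbb{E}\bigl[(N-np)^{2q}\bigr] = \O(n^q)$ (they grow like $(\mathrm{Var}\,N)^q$ up to constants — provable by induction, or by a crude bound via Proposition \ref{Prop:BinomBound} applied to even moments), the remainder is $\O\bigl((np)^{-\alpha-2q}\,n^q\bigr) = \O\bigl((np)^{-\alpha-q}\bigr)$ after recalling $p$ is fixed, which is the claimed error order.

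It remains to collect the main term into the stated form. Replacing $\mathbb{E}\bigl[(N-np)^m\,\mathbbm{1}\{N>\eta n\}\bigr]$ by the full moment $\mu_m := \mathbb{E}[(N-np)^m]$ costs only $\O(e^{-n\mathcal{D}(\eta\|p)})$ again (the contribution of $\{N\le\eta n\}$ is super-polynomially small). Now $\mu_m = \O(n^{\lfloor m/2\rfloor})$, so the term indexed by $m$ contributes at order $(np)^{-\alpha-m}n^{\lfloor m/2\rfloor} \asymp (np)^{-\alpha - \lceil m/2\rceil}$; grouping the $m=2j$ and $m=2j+1$ terms together gives a contribution of order $(np)^{-\alpha-j}$, and its coefficient — built from $\binom{-\alpha}{2j}$, $\binom{-\alpha}{2j+1}$ and the leading-in-$n$ parts of $\mu_{2j}/n^j$, $\mu_{2j+1}/n^j$ — depends only on $\alpha$ and $p$. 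Setting $C_{\alpha,k}$ equal to the sum of these grouped coefficients at order $k$ yields $f_\alpha(n) = (np)^{-\alpha}\sum_{k=0}^{q-1}C_{\alpha,k}(np)^{-k} + \O\bigl((np)^{-\alpha-q}\bigr)$. The main obstacle I anticipate is the bookkeeping in this last step: one must check that the $n$-dependence of each central moment $\mu_m$ is a polynomial in $n$ whose lower-order terms reshuffle cleanly into higher-order $C_{\alpha,k}$'s without spoiling the $p$-only dependence of the constants, and that truncating the Taylor series at $m=2q-1$ (rather than $q-1$) is exactly what is needed to reach remainder order $(np)^{-\alpha-q}$. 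Everything else is a routine combination of the Chernoff tail bound and binomial moment estimates already available from Proposition \ref{Prop:BinomBound}.
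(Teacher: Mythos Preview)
The paper does not give its own proof of this proposition: it defers to \cite{Znidaric_2009} (expansion of the generating function of central moments) and \cite{Cichon_Golebiewski_2012} (Bernstein polynomials). Your route is genuinely different and more self-contained relative to the paper: you Taylor-expand $k^{-\alpha}=(np)^{-\alpha}\bigl(1+(k-np)/(np)\bigr)^{-\alpha}$ directly on the bulk $k>\eta n$, kill the tail $k\le\eta n$ with the same Chernoff bound already invoked in Proposition~\ref{Prop:BinomBound}, and reduce the main term to the central-moment estimates $\mu_m=\E{(N-np)^m}=\O(n^{\lfloor m/2\rfloor})$. The argument is correct, and its advantage is that it stays entirely within the toolkit the paper has already built; the trade-off is that the cited generating-function approach yields explicit closed forms for the $C_{\alpha,k}$, whereas yours gives them only implicitly as finite sums over $(m,j)$.

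The only point to tighten --- which you already flag --- is the reshuffling step: you need that each $\mu_m$ is an honest polynomial in $n$ of degree $\lfloor m/2\rfloor$ with coefficients depending only on $p$, not merely $\O(n^{\lfloor m/2\rfloor})$. This follows from expanding $\bigl(\sum_{i=1}^n(T_i-p)\bigr)^m$ and using independence together with $\E{T_i-p}=0$, so that only multi-indices in which every $i$ appears at least twice survive. Once that is stated, the coefficient of $(np)^{-\alpha-k}$ is a finite sum over pairs $(m,j)$ with $m-j=k$ and $k\le m\le 2k$, each summand depending only on $\alpha$ and $p$; the finitely many leftover terms with $m-j\ge q$ and $m\le 2q-1$ are each $\O\bigl((np)^{-\alpha-q}\bigr)$, so they fold into the remainder.
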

\begin{proof}
    See \cite{Znidaric_2009} for the original proof using expansion of the generating function of central moments, or \cite{Cichon_Golebiewski_2012} for another generalized approach based on Bernstein polynomials.
\end{proof}

Finally, we need this last Lemma in order to construct asymptotic expansions for sums of inverse binomial moments.

\begin{lemma}\label{BoundSum}
    For every $j = 1, \dots, q-2$, let $p_j$ be a polynomial of degree at most $3j$ and depending only on the first $j+2$ moments of some r.v. $Y$. Furthermore, let $\phi$ be the p.d.f. of the standard Gaussian distribution. Then, we have
    \begin{align*}
        \sum_{j=1}^{q-2} f_{j/2}(n) p_j(x) \phi(x) = \sum_{j=1}^{q-2} n^{-j/2} p^*_j(x) \phi(x) + \O\left( n^{-(q-1)/2} \right)
    \end{align*}
    uniformly in $x$, where $p^*_j$'s are polynomials of degree at most $3j$ and depending only on the first $j+2$ moments of $Y$ and on moments of $T$.
\end{lemma}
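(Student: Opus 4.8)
The plan is to substitute the asymptotic expansion from Proposition \ref{BinomExpansion} for each inverse binomial moment $f_{j/2}(n)$ and then reorganize the resulting double sum by powers of $n^{-1/2}$, collecting terms into new polynomials $p^*_j$. First I would apply Proposition \ref{BinomExpansion} with $\alpha = j/2$ and a truncation order chosen so that the remainder is small enough; since we want an overall error of $\O(n^{-(q-1)/2})$ and we have $f_{j/2}(n)$ multiplied by a bounded (in $x$) quantity $p_j(x)\phi(x)$, I would expand $f_{j/2}(n)$ up to order $q_j$ where $j/2 + q_j \geq (q-1)/2$, i.e. take $q_j = q - 1 - j$ terms (for $j \le q-2$ this is $\ge 1$). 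This gives, for each $j$,
\begin{align*}
    f_{j/2}(n) = \sum_{k=0}^{q-2-j} C_{j/2,k} \, \frac{1}{(np)^{j/2 + k}} + \O\!\left(n^{-(q-1)/2}\right),
\end{align*}
where the constants $C_{j/2,k}$ depend only on $j$ and $p$ (hence only on moments of $T$). Multiplying by $p_j(x)\phi(x)$ and using the linearity of uniform bounds (Proposition \ref{LinBounds}) together with the boundedness $\sup_x |p_j(x)\phi(x)| < \infty$, each remainder stays $\O(n^{-(q-1)/2})$ uniformly in $x$.

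Next I would interchange the two finite sums and regroup by the total power of $n^{-1/2}$. Writing $m = j + 2k$ for the exponent so that $n^{-(j/2+k)} = n^{-m/2}$, the main term becomes
\begin{align*}
    \sum_{j=1}^{q-2} \sum_{k=0}^{q-2-j} \frac{C_{j/2,k}}{p^{j/2+k}}\, n^{-m/2}\, p_j(x)\phi(x) = \sum_{m=1}^{q-2} n^{-m/2} \Big( \sum_{\substack{j+2k = m \\ 1 \le j \le q-2,\ k \ge 0}} \frac{C_{j/2,k}}{p^{j/2+k}}\, p_j(x) \Big) \phi(x) + (\text{higher-order terms}),
\end{align*}
and I would define $p^*_m(x) := \sum_{j+2k=m} p^{-(j/2+k)} C_{j/2,k}\, p_j(x)$. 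Any term with $m \ge q-1$ that arises from the regrouping carries a factor $n^{-m/2} = \O(n^{-(q-1)/2})$ and, being multiplied by a bounded function of $x$, folds into the error term by Proposition \ref{LinBounds} again. Since each $p_j$ appearing in the sum defining $p^*_m$ has $j \le m$, its degree is at most $3j \le 3m$, so $p^*_m$ has degree at most $3m$; and its coefficients are built from the coefficients of the $p_j$'s (depending on the first $j+2 \le m+2$ moments of $Y$) and the constants $C_{j/2,k}/p^{j/2+k}$ (depending only on $p$, i.e. on moments of $T$), as required.

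The main obstacle is purely bookkeeping rather than conceptual: one must be careful that the truncation order in Proposition \ref{BinomExpansion} is taken large enough for every $j$ simultaneously (the worst case is $j=1$, needing $q-3$ correction terms beyond the leading one), and that \emph{all} discarded pieces — both the Proposition's remainders and the regrouped terms of order $n^{-m/2}$ with $m \ge q-1$ — are genuinely $\O(n^{-(q-1)/2})$ uniformly in $x$. The uniformity is where Proposition \ref{LinBounds} and the elementary bound $\sup_{x \in \rr} |x|^d \phi(x) < \infty$ for each fixed degree $d$ do all the work: since there are only finitely many $j$ and finitely many $k$, a finite number of applications of linearity of uniform bounds suffices to absorb every error contribution into a single $\O(n^{-(q-1)/2})$ term.
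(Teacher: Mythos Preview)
Your proposal is correct and follows essentially the same route as the paper: expand each $f_{j/2}(n)$ via Proposition~\ref{BinomExpansion}, multiply by the uniformly bounded factor $p_j(x)\phi(x)$, sum over $j$ using Proposition~\ref{LinBounds}, and regroup by total power $m=j+2k$ of $n^{-1/2}$ to define $p^*_m$ as a linear combination of the $p_j$'s with $j\le m$ of the same parity. The only cosmetic difference is the truncation level chosen in Proposition~\ref{BinomExpansion} (you take $q_j=q-1-j$ terms, the paper takes a few more), which is immaterial since the extra terms are absorbed into the $m\ge q-1$ error anyway; your index bookkeeping is in fact a bit cleaner than the paper's.
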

\begin{proof}
    For each $j = 1, \dots, q-2$, we can use Proposition \ref{BinomExpansion} to obtain
    \begin{align*}
        \left| f_{j/2}(n) - \sum_{i=0}^{q-2 + \ceil{j/2}} C_{j,i} (np)^{-j/2 + i} \right| \leq K_j n^{-(q-1)/2}
    \end{align*}
    where $K_j > 0$ is a constant. This comes from the fact that $-j/2 + q-2 + \ceil{j/2} < q-1$ and $-j/2 + q-1 + \ceil{j/2} \geq q-1$. Therefore, for all $x \in \rr$, we have also that 
    \begin{align*}
        \left| f_{j/2}(n) p_j(x) \phi(x) - \sum_{i=0}^{q-2 + \ceil{j/2}} C_{j,i} (np)^{-j/2 + i} p_j(x) \phi(x) \right| \leq \left| p_j(x) \phi(x) \right| K_j n^{-(q-1)/2}.
    \end{align*}
    However $p_j(x)\phi(x)$ is bounded uniformly because $\phi(x)$ has an exponential decay, and so we can put the bound inside the constant $K_j$. Hence, for each $j = 1, \dots, q-2$ we conclude that
    \begin{align}\label{UniformBoundForEachj}
        f_{j/2}(n) p_j(x) \phi(x) = \sum_{i=0}^{q-2 + \ceil{j/2}} C_{j,i} (np)^{-j/2 + i} p_j(x) \phi(x) + \O\left( n^{-(q-1)/2} \right).
    \end{align}
    uniformly in $x$. Now we can sum equality \eqref{UniformBoundForEachj} over every $j$, giving 
    \begin{align*}
        \sum_{j=1}^{q-2} f_{j/2}(n) p_j(x) \phi(x) = \sum_{j=1}^{q-2} ~ \sum_{i=0}^{q-2 + \ceil{j/2}} C_{j,i} (np)^{-j/2 + i} p_j(x) \phi(x) + \O\left( n^{-(q-1)/2} \right)
    \end{align*}
    still uniformly in $x$ due to Proposition \ref{LinBounds}. Noticing that for every couple $(i,j)$ we have $-j/2 + i = -k/2$ where $1 \leq k \leq q-2$ and summing over each of such $k$, we can therefore rewrite the equality as
    \begin{align*}
        \sum_{j=1}^{q-2} f_{j/2}(n) p_j(x) \phi(x) = \sum_{j=1}^{q_2} n^{-j/2} p^*_j(x) \phi(x) + \O\left( n^{-(q-1)/2} \right)
    \end{align*}
    unformly in $x$, where $p^*_j$ is a polynomial given by 
    \begin{align*}
        p^*_j(x) = 
        \begin{cases}
            p^{-j/2} \sum_{i=0}^{\frac{j-1}{2}} C_{2i+1, \frac{j-1}{2} -i} ~ p_{2i+1}(x) & \text{if $j$ odd}\\
            p^{-j/2} \sum_{i=1}^{\frac{j}{2}} C_{2, \frac{j}{2} -i} ~ p_{2i}(x) & \text{if $j$ even}
        \end{cases}
    \end{align*}
    With this expression, it's easy to see that $p^*_j$ has the same degree as $p_j$, that it only depends on the first $j+2$ moments of $Y$ because it is a linear combination of all polynomials $p_k$ where $k \leq j$ is of same parity as $j$, and that it also depends on the moments of $T$ because of the constants $C_{j,i}$ depending on $p$. Thus proving the result.
\end{proof}

After all these intermediate results, we are finally ready to prove Theorem \ref{MainTheorem}.

\begin{proof}[Proof of Theorem \ref{MainTheorem}]
    Using the Edgeworth expansions of Lemma \ref{EdgMean} in each term of the decomposition of the c.d.f. $F_Z$ provided by Lemma \ref{Lemma8.2}, for $\alpha = -(q-1)/2$ we obtain
    \begin{align}\label{CDF_Edg}
        F_Z(x) &= \left[1 - (1-p)^n \right] \Phi(x) + \sum_{j=1}^{q-2} \left( \sum_{k=1}^n k^{-\frac{j}{2}} \binom{n}{k} p^k (1-p)^{n-k} \right) p_j(x) \phi(x)\nonumber \\
        &+ \O\left( \sum_{k=1}^n k^{-\frac{q-1}{2}} \binom{n}{k} p^k (1-p)^{n-k} \right) + \O(n^{-(q-1)/2})
    \end{align}
    uniformly in $x$ due to Proposition \ref{LinBounds}. Here the $\left[1 - (1-p)^n \right] \Phi(x)$ term comes from the fact that $\sum_{k=1}^n \binom{n}{k} p^k (1-p)^{n-k} = 1 - (1-p)^n$ since the sum starts at $1$ and not at $0$. Now, the idea is to bound every term involving binomial sums in order to find an Edgeworth expansion as powers of $n^{-1/2}$ instead of powers of $k^{-1/2}$.
    \\
    First, we have that $|(1-p)^n \Phi(x)| \leq (1-p)^n$ which is uniformly bounded in $x$ by $n^{-(q-1)/2}$, and so we can put it inside the already present $\O(n^{-(q-1)/2})$.
    \\
    Next, according to Lemma \ref{BoundO} we have $\sum_{k=1}^n k^{-\frac{q-1}{2}} \binom{n}{k} p^k (1-p)^{n-k} = \O(n^{-(q-1)/2})$, hence allowing us to rewrite \eqref{CDF_Edg} as
    \begin{align}\label{CDF_Edg_2}
        F_Z(x) &= \Phi(x) + \sum_{j=1}^{q-2} \left( \sum_{k=1}^n k^{-\frac{j}{2}} \binom{n}{k} p^k (1-p)^{n-k} \right) p_j(x) \phi(x) + \O(n^{-(q-1)/2})
    \end{align}
    uniformly in $x$. Finally, it only remains to apply Lemma \ref{BoundSum} in order to bound uniformly the last sum term of \eqref{CDF_Edg_2}, giving 
    \begin{align*}
        F_Z(x) &= \Phi(x) + \sum_{j=1}^{q-2} n^{-j/2} p^*_j(x) \phi(x) + \O(n^{-(q-1)/2})
    \end{align*}
    where $p^*_j$ are polynomials verifying the required conditions, thus proving our main theorem.
\end{proof}

\section*{Acknowledgments}

This work was done as part of a 3-month internship in the Causality team of Criteo AI Lab. The author would like to thank all members of this team for their hospitality, especially Matthieu Martin for having supervised this internship and reviewed parts of this paper several times.

\newpage 

\bibliographystyle{utphys}
\bibliography{bibliography}
\nocite{*}

\end{document}